\DeclarePairedDelimiter\floor{\lfloor}{\rfloor}
\newtheorem{theorem}{Theorem}[section]
\newtheorem{lemma}[theorem]{Lemma}
\newtheorem{corollary}[theorem]{Corollary}
\newtheoremstyle{case}{}{}{}{}{}{:}{ }{}
\theoremstyle{case}
\newtheorem{case}{Case}
\theoremstyle{definition}
\theoremstyle{remark}
\newtheorem{remark}[theorem]{Remark}
\def\Fq{{\mathbb F}_q}
\def\FF{{\mathbb F}}
\def\PP{{\mathbb P}}
\newcommand{\tr}{\operatorname{tr}}
\newcommand{\Ev}{\operatorname{Ev}}
\newcommand{\w}{\operatorname{w}_H}
\newcommand{\rank}{\operatorname{rank}}
\newcommand{\supp}{\mathrm{S}}
\def\Pam{\mathbb{P}^{{m\choose 2}-1}}
\def\X{\mathbf{X}}
\def\A{A^\prime}
\def\Ak{A^\prime_{2k-1}}
\def\Am{\mathbb{A}_m}
\def\DetAm{\mathbb{A}(t, m)}
\def\DetAmt{\mathbb{A}(2t, m)}
\def\Amr{A(2r, m)}
\def\narm{n_a(2r, m)}
\def\Natm{N_a(2t, m)}
\def\CAtm {C_{\mathbf{A}}(2t, m)}
\def\C {\widehat{C}_{\mathbf{A}}(2t, m)}
\def\Wk {\mathrm{W}_{2k}(2t, m)}
\def\Wkr {\mathrm{w}_{2k}(2r, m)}
\def\DetVar{\mathbf{Det_A}(2t, m)}
\def \Lat{\Lambda_a(2t, m)}
\begin{document}
	
	\title[Codes Associated to Skew-symmetric Determinantal Varieties]{Linear Codes Associated to Skew-symmetric Determinantal Varieties}
	\author{Peter Beelen }
\address{Department of Applied Mathematics and Computer Science,\newline \indent
	Technical University of Denmark,\newline \indent
	Matematiktorvet 303B, 2800 Kgs. Lyngby, Denmark.}
\email{pabe@dtu.dk }
	\author{Prasant Singh}
\address{Department of Applied Mathematics and Computer Science,\newline \indent
	Technical University of Denmark,\newline \indent
	Matematiktorvet 303B, 2800 Kgs. Lyngby, Denmark.}
\email{psinghprasant@gmail.com}

	\date{\today}
	
\begin{abstract}
In this article we consider linear codes coming from skew-symmetric determinantal varieties, which are defined by the vanishing of minors of a certain fixed size in the space of skew-symmetric matrices. In odd characteristic, the minimum distances of these codes are determined and a recursive formula for the weight of a general codeword in these codes is given. 

\end{abstract}

%

\maketitle
\section{Introduction}
Let $\FF_q$ be the finite field with $q$ elements. From a mathematical point of view, an $\FF_q$-linear error-correcting code is a subspace of the vector space $\FF_q^n$. Algebraic varieties $V$ defined over $\FF_q$ are a rich source of such codes and various constructions of codes from a given variety exist. A very natural and much-studied construction of codes uses the points in $V(\FF_q)$, the set of $\FF_q$-rational points of $V$, as columns of a matrix. A code is then obtained by considering this matrix as generator matrix of the code. To construct the matrix, an order of the points will need to be chosen. In case the variety $V$ is contained in a projective space $\PP^{k-1}$, a choice of representatives of the $\FF_q$-rational points also needs to be made. Allowing any family of $\FF_q$-rational points of $\PP^{k-1}$ in this setup, leads to a more general construction of codes studied in \cite{TVN}. There, such a family of points was called a projective system. In the setting of projective systems, it was observed that a different choices of ordering and representatives of the points, give rise to equivalent codes. In particular the minimum distance and weight distribution of the resulting codes are independent on these choices. Another observation from \cite{TVN} is that if the projective system is not contained in a hyperplane of $\PP^{k-1}$, then the dimension of the resulting code is $k$.

Let $C \subset \FF_q^n$ be an $\FF_q$-linear code arising in this way from a projective variety $V \subset \PP^{k-1}$ defined over $\FF_q$. The Hamming weight $\w(c)$ of a codeword $c=(c_1,\dots,c_n) \in C$ is defined as $\w(c):=\#\{i \, : \, c_i \neq 0 \}.$ However, by construction $n-\w(c)$, then equals the number of common $\FF_q$-rational points on the intersection of $V$ and a certain hyperplane $H$ defined over $\FF_q$ depending on $c$. Therefore questions about the possible weights of codewords, can be rephrased in terms of intersections of $\FF_q$-rational hyperplanes with $V$. Codes coming from the Grassmannian variety have been studied from this point of view in \cite{Ryan,Ryan2,N,GL,BP}. Similarly, toric varieties were used to construct codes in \cite{Hansen,Diego}, flag varieties were used in \cite{Rodier}, and so on. For an overview see for example \cite{Little} and the references therein. Using classical determinantal varieties of generic matrices, a class of codes called determinantal codes were introduced and studied in \cite{BGH,BG}. As was shown there, determinantal codes have relatively few weights. More precisely, let $\ell \le m$ be natural numbers and $\mathbf{Det}(t,\ell, m)$ be the projective variety consisting of $\ell\times m$ matrices with coefficients in $\FF_q$ of rank $\leq t$. Then the corresponding code has at most $\ell$ weights. Apart from the determinantal varieties of generic matrices, other classically studied determinantal varieties are associated to for example symmetric and skew-symmetric matrices. For more details about these varieties one may refer to \cite{HT,JLP,S}. Inspired by this, we consider in this article codes associated to skew-symmetric determinantal varieties $\DetVar$ consisting of all $m \times m$ skew symmetric matrices with coefficients in $\FF_q$. We will call these codes \emph{skew determinantal codes}. The lengths and the dimensions of these codes are easily determined, since the number of $\FF_q$-rational points on $\DetVar$ is well known and the variety is nondegenerately embedded in $\PP^{\binom{m}{2}-1}$. However, the determination of the minimum distance requires some work. In fact, we will compute all possible nonzero weights a codeword of a skew determinantal code can have and then determine the least nonzero weight among them. Equivalently, we determine the possible number of $\FF_q$-rational intersection points that an $\FF_q$-rational hyperplane and $\DetVar$ can have. It turns out that like determinantal codes, only few possibilities can occur, namely at most $\lfloor m/2 \rfloor$. 


\section {Preliminaries: Skew-symmetric Determinantal Varieties}

We begin this section by recalling the definition of skew-symmetric determinantal varieties.
Let $\Fq$ be a finite field with $q$ elements and let $m$ be a positive integer. By a \emph{skew-symmetric} or \emph{anti-symmetric} matrix $A$ of size $m$ over $\Fq$, we always mean that $A$ is an $m\times m$ matrix over $\Fq$ with all diagonal entries zero and $A^T= -A$. If the characteristic of the field $\Fq$ is not $2$ then the condition $A^T= -A$ is enough for $A$ to be skew-symmetric. Let $\Am$ be the set of all skew-symmetric matrices of size $m$ over $\Fq$. It is well known that $\Am$ is an $\FF_q$-vector space of dimension $\binom{m}{2}$ and hence one can think of $\Am$ as the affine space $\mathbb{A}(\Fq)^{\binom{m}{2}}$.

Let $t$ be an integer satisfying $t\leq m$. We define
\[
\DetAm := \{A\in\Am \,:\, \rank(A)\leq t\}
\]
and
\[
A(t, m) :=\{A\in\Am: \rank(A)= t\}.
\]
Note that $\DetAm=\emptyset$ for $t<0$ and $A(t, m) = \DetAm \setminus \mathbb{A}(t-1, m).$

It is well known that the rank of a skew-symmetric matrix is even (see for example \cite{Albert}). Therefore we have, $A(2t+1, m)=\emptyset$ for every nonnegative integer $t$. Consequently, we get $\mathbb{A}(2t, m)= \mathbb{A}(2t+1, m).$ By definition we have
\[
\DetAmt =\bigcup\limits_{r=0}^t\Amr
\]
and the union is disjoint.  If $\narm$ and $\Natm$ denote the cardinality of the sets $\Amr$ and $\DetAmt$, then
\begin{equation}
\label{eq: splitlength}
\Natm = \sum\limits_{r=0}^{t} \narm.
\end{equation}
Explicit expressions for $\narm$ in terms of $r,m$ and $q$ are well known in the literature (for odd characteristic see Theorem 3 in \cite{Carlitz}, for even characteristic see \cite{MacW} or Chapt.~15, \S 2, Theorem 2 in \cite{MacWSl}). Indeed, $n_a(0, m)= 0$ and for any $r\neq 0$
\begin{equation}
\label{eq:NumbSkewMat}
\narm = q^{r(r-1)}\frac{\prod\limits_{i=0}^{2r-1}(q^{m-i}-1)}{\prod\limits_{i=0}^{r-1}(q^{2(r-i)}-1)}.
\end{equation}

Let $\PP(\Am)$ be the projective space over $\Am.$ Since $\Am$ is an $\binom{m}{2}$ dimensional vector space, we can write $\PP(\Am)=\Pam.$ For any non-zero matrix $A\in\Am$ we denote by $[A]$ the corresponding homogeneous point of $\Pam$. More precisely, if $A=(a_{ij}) \in \Am \setminus \{0\},$ then $[A]=[a_{12}:\dots:a_{m-1 \, m}],$ with all $a_{ij}$ such that $i<j$ occurring as coordinates in $[A].$
Let $\DetVar$ be the  image of the set $\DetAmt$  under this natural map $\Am\setminus\{0\}\to \Pam$. The set $\DetVar\subseteq \PP(\Am)$ is a projective variety. More precisely, let $\X=(X_{ij})_{m\times m}$ be an $m\times m$ matrix in $m^2$ indeterminates $X_{ij}$ over $\Fq$. Then $\DetVar$ is given as the zero locus of all $2t+1$ minors of $\X$ and linear polynomials $X_{ij}+X_{ji}$ and $X_{ii}$ for every $1\leq i \le j\leq m$. Note that this describes $\DetVar$ as a subset of $\PP^{m^2-1},$ but the linear equations $X_{ij}+X_{ji}$ and $X_{ii}$ determine an $\binom{m}{2}-1$ dimensional projective subspace of $\PP^{m^2-1},$ which we previously had identified with $\PP(\Am).$ It is not hard to show that if $t>0$, then $\DetVar$ is not contained in a hyperplane of $\Pam$, or in other words that $\DetVar$ is nondegenerately embedded in $\Pam$. Indeed let a pair $(k,\ell)$ be given such that $1 \le k < \ell \le m$. Then the skew-symmetric matrix $A$ defined by $A_{k,\ell}=1,A_{\ell,k}=-1$ and $A_{ij}=0$ otherwise, is mapped to the projective point $[A]=[0:\dots:0:1:0:\dots:0] \in \Pam$ with a $1$ in the position corresponding to the pair $(k,\ell).$ Hence $\DetVar$ contains $\binom{m}{2}$ projective points in general position, implying that it is not contained in a hyperplane.

We call $\DetVar$ a \emph{skew-symmetric determinantal variety}. One can indeed show that it is a variety in the usual sense, but we will not need this fact here. Let $\Lat$ denote the number of $\FF_q$-rational points of $\DetVar$, then $\Lat$ is given by
\begin{equation*}
\Lat =\dfrac{\Natm-1}{q-1},
\end{equation*}
where the value of $\Natm$ is determined by equations \eqref{eq: splitlength} and \eqref{eq:NumbSkewMat}.

\section{Linear Codes Associated to the Determinantal Variety $\DetVar$}

Using the $\FF_q$-rational points of $\DetVar$ as a projective system, yields a linear code $\CAtm$ which we call a skew determinantal code. In order to make this precise, let $N:=\Lat$ and let $B_1, \ldots, B_N$ be representatives of all the $\FF_q$-rational points of $\DetVar$. Note that for all $i$ we have $B_i \in \FF_q^{\binom{m}{2}}.$ An element $B =(b_{ij})_{1 \le i < j \le m} \in \FF_q^{\binom{m}{2}}$ gives rise to a unique skew-symmetric matrix $A=(a_{ij})$ by setting $a_{ij}:=b_{ij}$ if $i<j$, $a_{ii}:=0$, and $a_{ij}:=-b_{ji}$ if $i>j$. Therefore we will with slight abuse of notation identify $\FF_q^{\binom{m}{2}}$ with the space of skew-symmetric $m \times m$ matrices.

By construction, the matrix $G(2t,m)$ with columns $B_1,\dots,B_N$ is a generator matrix of $\CAtm$. Clearly therefore, the length of $\CAtm$ equals $N=\Lat$. Further, since from the previous section, we know that the $\FF_q$-rational points of $\DetVar$ are not contained in any hyperplane of $\Pam$, the dimension of $\CAtm$ equals $\binom{m}{2}.$ In this section, we will determine the minimum distance of $\CAtm$ in case $q$ is odd. The case $q$ is even seems more involved and could be interesting future work. For the remainder of this article $m$ and $t$ will be assumed to be integers such that $m>0$ and $0 \le 2t \le m$.

Another way to describe $\CAtm$ is as the image of an evaluation map. Let $\Fq[\X]_1$ denotes the vector space of linear homogeneous polynomials in variables $X_{ij}$, $1\leq i < j\leq m.$ Consider the evaluation map
\begin{equation*}
\Ev: \Fq[\X]_1\to \mathbb{F}_q^N\text{ defined by }f(\X)=\sum f_{ij} X_{ij}\mapsto (f(B_1),\ldots, f(B_N)).
\end{equation*}
The evaluation map $\Ev$ defined above is a linear map. Moreover, varying the pairs $(i,j)$ with $1 \le i < j \le m$, $\Ev(X_{ij})$ are precisely the $\binom{m}{2}$ rows of the matrix $G(2t,m).$ Therefore the image of $\Ev$ equals $\CAtm.$ Since $\FF_q[\X]_1$ is an $\binom{m}{2}$-dimensional vector space over $\FF_q$, the map $\Ev$ is a bijection. In order to determine the minimum distance of $\CAtm$ we consider a related code $\C$ defined as the image of another evaluation map
$$
\widehat{\Ev}:\Fq[\X]_1\to \mathbb{F}_q^{\Natm}\text{ defined by }f(\X)\to (f(A))_{A\in\DetAmt}.
$$
Note that in the definition of $\widehat{\Ev}$ we implicitely used the identification of elements in $\FF_q^{\binom{m}{2}}$ and $m \times m$ skew-symmetric matrices mentioned in the beginning of this section. The weights of the codewords in $\CAtm$ and $\C$ are directly related to each other as the following easy lemma shows.
\begin{lemma}\label{lemma: aff-proj}
Let $f\in\Fq[\X]_1$ be a polynomial. Then the Hamming weights of the codewords $\Ev(f)$ and $\widehat{Ev}(f)$ satisfy
$$
\w(\widehat{\Ev}(f)) = (q-1) \w(\Ev(f)).
$$
\end{lemma}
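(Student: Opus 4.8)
The plan is to exploit the homogeneity of $f$ together with the fact that $\DetAmt$ is a cone, i.e.\ closed under scalar multiplication. Since $f\in\Fq[\X]_1$ is a homogeneous linear polynomial, we have $f(\lambda A)=\lambda f(A)$ for every $A\in\Am$ and every $\lambda\in\Fq$. In particular $f$ vanishes on the zero matrix, which therefore contributes nothing to $\w(\widehat{\Ev}(f))$, and it only remains to account for the nonzero matrices in $\DetAmt$.

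Next I would partition the nonzero elements of $\DetAmt$ into orbits under the scaling action of $\Fq^*=\Fq\setminus\{0\}$. Because $\DetAmt$ is the affine cone over $\DetVar$, it is closed under this action, and each orbit $\{\lambda A:\lambda\in\Fq^*\}$ of a nonzero matrix $A$ has exactly $q-1$ elements and corresponds to precisely one $\Fq$-rational point $[A]$ of $\DetVar$. Thus these orbits are in bijection with the $N=\Lat$ representatives $B_1,\dots,B_N$, each orbit being $\{\lambda B_i:\lambda\in\Fq^*\}$ for a unique index $i$.

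The key observation is then that $f$ either vanishes on an entire orbit or on none of it: by homogeneity $f(\lambda B_i)=\lambda f(B_i)$, so if $f(B_i)=0$ all $q-1$ entries indexed by that orbit are zero, while if $f(B_i)\neq 0$ all $q-1$ of them are nonzero. Hence each orbit contributes either $q-1$ or $0$ to $\w(\widehat{\Ev}(f))$, according to whether $f(B_i)\neq 0$ or $f(B_i)=0$. Summing over all orbits gives
\[
\w(\widehat{\Ev}(f)) = (q-1)\cdot\#\{i:f(B_i)\neq 0\} = (q-1)\,\w(\Ev(f)),
\]
since by definition $\w(\Ev(f))=\#\{i:f(B_i)\neq 0\}$.

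This argument is essentially a bookkeeping exercise and presents no serious obstacle; the only point requiring a little care is to confirm that the scaling orbits of the nonzero matrices in $\DetAmt$ partition cleanly and correspond one-to-one to the chosen projective representatives. This follows directly from the description of $\DetVar$ as the image of $\DetAmt\setminus\{0\}$ under the projection $\Am\setminus\{0\}\to\Pam$, so I expect the only subtlety to be purely notational.
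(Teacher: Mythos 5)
Your proof is correct and is essentially the paper's argument made explicit: the paper's one-line proof rests on exactly the homogeneity fact $f(A)\neq 0 \iff f(\alpha A)\neq 0$ for all $\alpha\in\FF_q^*$, and your orbit-counting (including the observation that the zero matrix contributes nothing) is just the bookkeeping the paper leaves implicit.
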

\begin{proof}
The proof of the lemma is a simple consequence of the fact that for every $f\in\Fq[\X]_1$ and any $A \in \DetAmt$, we have
	\[
	f(A)\neq 0\iff f(\alpha A)\neq 0 \text{ for all } \alpha \in \mathbb{F}_q^{*}.
	\]
	
	\end{proof}
In view of the above lemma, in order to compute the minimum distance of the code $\CAtm$, it is enough to calculate the minimum distance of the code $\C$.

To proceed further, let us write $M:= N_a(2t, m)$ and $\DetAmt=\{A_1,\ldots, A_M\}$ in some fixed order. Note that codewords of $\C$ are indexed by the set $\DetAmt$ therefore, for every codeword $c\in\C$ and  any $A_i\in\DetAmt$ we use the notation $c(A_i)$ to denote the $A_i^{\it th}$ coordinate of the codeword $c$. From now on we always fix $\Fq$ as a finite field with characteristic of $\Fq$ not equal to $2$.

\begin{theorem}\label{thm:skewsymm}
For any codeword $(c_A)_{A \in \DetAmt}\in\C$, there exist a unique skew-symmetric matrix $F\in\Am$ such that
\[
c_A= -\tr(FA) \text{ for every }A\in \DetAmt.
\]
\end{theorem}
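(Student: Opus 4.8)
The plan is to realize the abstract codeword as the evaluation of a concrete linear form and then to rewrite that form as a trace pairing. By definition of $\C$ any codeword $(c_A)_{A \in \DetAmt}$ equals $\widehat{\Ev}(f)$ for some $f = \sum_{i<j} f_{ij} X_{ij} \in \Fq[\X]_1$, so that $c_A = \sum_{i<j} f_{ij} a_{ij}$ when $A = (a_{ij})$. The starting computation I would carry out is to expand $\tr(FA)$ for two skew-symmetric matrices $F = (F_{ij})$ and $A = (a_{ij})$. Writing $\tr(FA) = \sum_{i,j} F_{ij} a_{ji}$ and using $a_{ji} = -a_{ij}$, $F_{ii} = a_{ii} = 0$, together with the symmetry obtained by swapping the two summation indices, one finds $-\tr(FA) = 2\sum_{i<j} F_{ij} a_{ij}$. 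In other words, the trace form is, up to the scalar $-2$, the standard bilinear pairing on the coordinates indexed by the pairs $i<j$.

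For existence this identity dictates the right matrix: since $q$ is odd, $2$ is invertible in $\Fq$, so I would define $F$ to be the skew-symmetric matrix with $F_{ij} = f_{ij}/2$ for $i<j$ (and $F_{ji} = -F_{ij}$, $F_{ii}=0$). Substituting this choice into the formula above gives $-\tr(FA) = \sum_{i<j} f_{ij} a_{ij} = c_A$ for every skew-symmetric $A$, and in particular for every $A \in \DetAmt$. This settles existence; note that the relation in fact holds over all of $\Am$, which is stronger than required.

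For uniqueness, suppose $F$ and $F'$ both represent the codeword and set $G := F - F'$, a skew-symmetric matrix with $\tr(GA) = 0$ for every $A \in \DetAmt$. The key observation is that $\DetAmt$ contains the elementary rank-two matrices $E_{k\ell}$ (with a $1$ in position $(k,\ell)$, a $-1$ in position $(\ell,k)$, and zeros elsewhere) already exhibited in Section~2; these lie in the variety as soon as $t \ge 1$ and they span $\Am$. Applying the trace identity to $A = E_{k\ell}$ gives $\tr(G E_{k\ell}) = -2 G_{k\ell}$, so $\tr(G E_{k\ell}) = 0$ forces $G_{k\ell} = 0$ for all $k<\ell$; by skew-symmetry $G = 0$ and hence $F = F'$.

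The whole statement is essentially a non-degeneracy assertion for the trace form, and I expect the only delicate point to be the uniqueness step. The subtlety is that one cannot merely invoke non-degeneracy of the pairing on $\Am \times \Am$: one needs that the specific subset $\DetAmt$ already spans $\Am$, so that the vanishing of $\tr(G\,\cdot\,)$ on $\DetAmt$ alone suffices to conclude $G=0$. This is precisely why the rank-two elementary matrices (equivalently, the non-degenerate embedding noted in Section~2) drive the argument, while the invertibility of $2$ in odd characteristic is what legitimizes the normalization $F_{ij} = f_{ij}/2$. The degenerate case $t = 0$, where $\DetAmt = \{0\}$ and uniqueness plainly fails, I would simply exclude or regard as trivial, assuming $t \ge 1$ throughout.
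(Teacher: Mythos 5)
Your proof is correct and follows essentially the same route as the paper: the same normalization $F_{ij}=f_{ij}/2$ (valid since $q$ is odd) for existence, and the same elementary rank-two matrices $E(k,\ell)\in\DetAmt$ (requiring $t\ge 1$) giving $\tr(GE(k,\ell))=-2G_{k\ell}$ for uniqueness. Your observation that the identity $-\tr(FA)=2\sum_{i<j}F_{ij}a_{ij}$ holds on all of $\Am$, and your explicit handling of the degenerate case $t=0$, are minor presentational variants of the paper's argument, not a different approach.
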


\begin{proof}
Let $c\in\C$ and let $f\in \FF_q[\X]_1$ be the linear polynomial such that $c=\widehat{\Ev}(f).$ In particular $c_A=f(A).$ Writing
$f=\sum_{i<j} f'_{ij}X_{ij}$, define the $m \times m$ matrix $F:=(f_{ij})$ by $f_{ij}:=f'_{ij}/2$ if $i<j$, $f_{ii}:=0$ and $f_{ij}:=-f'_{ji}/2$ if $i>j$. Note that $F$ is skew-symmetric. We claim that for any $A=(a_{ij}) \in \DetAmt$ it holds that $f(A)=-\tr(FA),$ which would show the existence of the matrix $F$ in the theorem. Indeed, we have
\begin{multline*}
f(A)= \sum_{i<j}f'_{ij}a_{ij}=\sum_{i<j}(f_{ij}-f_{ji})a_{ij}=\sum_{i<j}f_{ij}a_{ij}-\sum_{i<j}f_{ji}a_{ij} \\
=-\sum_{i<j}f_{ij}a_{ji}-\sum_{j>i}f_{ji}a_{ij}=-\sum_{i=1}^m \sum_{j=1}^m f_{ij}a_{ji}=-\tr(FA).
\end{multline*}
Here we used $a_{ij}=-a_{ji}$ in the fourth equality and $a_{ii}=0$ in the fifth.

To show uniqueness, suppose that there exist two skew-symmetric matrices $F=(f_{ij})$ and $G=(g_{ij})$ such that $\tr(FA)=\tr(GA)$ for all $A \in \DetAmt.$ For $k < \ell,$ define the matrix $E(k,\ell)$ as the matrix with zero entries everywhere except for the coordinates $(k,\ell)$, respectively $(\ell,k)$, where the matrix has entry $1$, respectively $-1$. Since $t \ge 1$, we have $E(k,\ell) \in \DetAmt.$ Moreover, $\tr(FE(k,\ell))=-f_{k\ell}+f_{\ell k}=-2f_{k\ell}$ and similarly $\tr(FE(k,\ell))=-2g_{k \ell}.$ Hence $F=G$ follows.
\end{proof}

Note that the assumption that $q$ is odd, was crucial in the proof of the theorem. Also note that since both $\FF_q[\X]_1$ and the space of skew-symmetric matrices $\Am$, are vector spaces of the same dimension $\binom{m}{2},$ any word of the form $(-\tr(FA))_A$ is a codeword of $\C.$ As a consequence of the theorem, we will see that the code $\C$ (and hence $\CAtm$) only can have few weights.
\begin{corollary}\label{cor:feweights}
If $c=(-\tr(FA))_{A \in \DetAmt} \in \C$ for a skew-symmetric matrix $F$, then the Hamming weight of $c$ only depends on the rank of $F$. Moreover, there are at most $\lfloor m/2 \rfloor$ possibilities for the Hamming weight of a nonzero codeword $c \in \CAtm.$
\end{corollary}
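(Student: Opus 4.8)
The plan is to exploit the congruence action of $\GL_m(\Fq)$ on the space of skew-symmetric matrices, under which the trace pairing appearing in Theorem~\ref{thm:skewsymm} transforms in a controlled way. First I would fix $P \in \GL_m(\Fq)$ and consider the map $\phi_P \colon \Am \to \Am$ given by $A \mapsto P^T A P$. This map sends skew-symmetric matrices to skew-symmetric matrices and preserves rank, so it restricts to a bijection of $\DetAmt$ onto itself. The essential computation is the trace identity
\[
\tr\bigl(F \cdot P^T A P\bigr) = \tr\bigl(P F P^T \cdot A\bigr),
\]
which follows from the cyclic invariance of the trace, together with the fact that $P F P^T$ is again skew-symmetric of the same rank as $F$. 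Since by Theorem~\ref{thm:skewsymm} every codeword of $\C$ has the form $c = (-\tr(FA))_{A \in \DetAmt}$, and since $\phi_P$ merely permutes the index set $\DetAmt$, reindexing the entries of $c$ by $\phi_P$ yields
\[
\w(c) = \#\{A \in \DetAmt : \tr(FA) \neq 0\} = \#\{A \in \DetAmt : \tr((PFP^T)A) \neq 0\},
\]
so the codewords associated with $F$ and with $P F P^T$ have equal Hamming weight.

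Next I would invoke the classification of skew-symmetric forms over $\Fq$ with $q$ odd: any skew-symmetric matrix of rank $2k$ is congruent to the block-diagonal normal form consisting of $k$ copies of $\left(\begin{smallmatrix} 0 & 1 \\ -1 & 0 \end{smallmatrix}\right)$ padded with zeros (see \cite{Albert}). In particular, two skew-symmetric matrices $F$ and $G$ with $\rank(F) = \rank(G)$ satisfy $G = P F P^T$ for some $P \in \GL_m(\Fq)$. Combined with the congruence-invariance established above, this shows that the codewords attached to $F$ and to $G$ have the same Hamming weight whenever $\rank(F) = \rank(G)$, which proves that $\w(c)$ depends only on the rank of $F$.

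For the counting statement, I would recall that the rank of a skew-symmetric matrix is even and bounded above by $m$, so $\rank(F) \in \{0, 2, 4, \ldots, 2\lfloor m/2 \rfloor\}$. The value $\rank(F) = 0$ forces $F = 0$ and hence the zero codeword, leaving at most $\lfloor m/2 \rfloor$ distinct nonzero weights among the codewords of $\C$. Finally, Lemma~\ref{lemma: aff-proj} provides a weight-preserving correspondence (up to the constant factor $q-1$) between $\C$ and $\CAtm$, transferring the bound to $\CAtm$ and completing the proof.

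The step requiring the most care is the congruence-invariance of the weight: one must verify both that $\phi_P$ is a genuine bijection of the index set $\DetAmt$, so that reindexing the coordinates is legitimate, and that the trace identity correctly intertwines $F$ with $P F P^T$ rather than with some other matrix. The remaining ingredients—the normal form for skew-symmetric matrices and the parity of their rank—are classical and may simply be cited.
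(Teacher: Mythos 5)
Your proposal is correct and follows essentially the same route as the paper: both use the congruence action $A \mapsto P^T A P$ (a rank-preserving bijection of $\mathbb{A}(2t,m)$), the cyclic trace identity, and Albert's normal form for skew-symmetric matrices to reduce to a single representative of each rank, then count the $\lfloor m/2 \rfloor$ possible even ranks and transfer the bound to $C_{\mathbf{A}}(2t,m)$ via Lemma \ref{lemma: aff-proj}. The only cosmetic difference is that the paper specializes $P$ to the matrix $L$ bringing $F$ to the normal form $E_{2k}$, whereas you phrase the invariance for arbitrary $P$ and then invoke the classification; the content is identical.
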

\begin{proof}
Let $c =(c_A)_{A \in \DetAmt} \in \C$ be a nonzero codeword. By Theorem \ref{thm:skewsymm}, there exists a nonzero skew-symmetric matrix $F$ such that $c_A=-tr(FA)$ for all $A \in \DetAmt.$ By Theorem 4 in \cite{Albert} or alternatively Chapter XV, Corollary 8.2 in \cite{Lang}, there exist a nonsingular $m\times m$ matrix $L$ such that
\[
L F L^T= \begin{bmatrix}
E      &   \\
& E &  \\
& & \ddots &  \\
& &  & E& \\
\hdotsfor{5} \\
0       & 0 & 0 & \dots &0
\end{bmatrix}, \text{ with } E= \begin{bmatrix}
0       & 1 \\
-1       & 0
\end{bmatrix}.
\]
The number of occurrences of the matrix $E$ depends on the rank of $F$. Specifically, if the rank of $F$, which necessarily is even, equals $2k$ for some $1 \le k \le m/2$, then the matrix $E$ occurs $k$ times.
Since $L$ is nonsingular, the mapping $A\mapsto L^TAL$ is a permutation of $\DetAmt$. For every matrix $A$, we have $\tr((L^TFL)A)= \tr(F(L^TAL))$. Therefore, the Hamming weights of the codewords $c$ and $(\tr((L^TFL)A))_{A \in \DetAmt}$ are the same. In particular, the Hamming weight of $c$ only depends on the rank of $F$. The first part of the corollary now follows.

Since $k$ needs to be an integer satisfying $1 \le k \le m/2,$ we see that there are only $\lfloor m/2 \rfloor$ possible ranks for $F$. This shows that there are at most $\lfloor m/2 \rfloor$ possibilities for the Hamming weight of $c$. By Lemma \ref{lemma: aff-proj}, the second part of the corollary follows.
\end{proof}

To determine the minimum distance of $\CAtm,$ we need to determine which of the possible $\lfloor m/2 \rfloor$ weights is the smallest. In order to do that, let us fix some notations. If $c=(-\tr(FA))_{A \in \DetAmt} \in \C$ for a skew-symmetric matrix $F$ of rank $2k$, we define $\Wk:=\w(c)$. By Corollary \ref{cor:feweights}, this is a valid definition, since $\w(c)$ does not depend on the choice of $F$.  For $0 \le 2k \le m,$ define the skew-symmetric matrix of rank $2k$
\[
E_{2k}:=\begin{bmatrix}
E      &   \\
& E &  \\
& & \ddots &  \\
& &  & E& \\
\hdotsfor{5} \\
0       & 0 & 0 & \dots &0
\end{bmatrix},
\]
with $E$ as in the proof of Corollary \ref{cor:feweights} occurring exactly $k$ times.
Then
$$\Wk=|\{A \in \DetAmt \,:\, \tr(E_{2k}A) \neq 0\}|.$$
If we define
\[\supp_{2k}(2r,m):=\{A\in A(2r, m) \,: \, \tr(E_{2k}A)\neq 0\} \text{ and }
\Wkr:= \lvert \supp_{2k}(2r,m) \rvert,
\]
then
\begin{equation}\label{eq: splitwt}
\Wk=\sum\limits_{r=1}^t\Wkr.
\end{equation}
Our next goal is to find recursive formulas for $\Wk$, that we will use as the key ingredient to show which of the $\Wk$ is the smallest. Note that this approach is inspired by \cite{BG}, where a similar approach was developed to determine the minimum distance of the determinantal code introduced in \cite{BGH}.

\begin{theorem}
	\label{Thm: Iterativeweight}
	Let $m$ and $1\leq 2k\leq m$  be fixed. Let $2r\leq m$ and $\Wkr$ be as defined above.	Then
	\begin{equation*}
	\begin{split}
	\Wkr & = q^{2r}\mathrm{w}_{2k-2}(2r, m-2) + (q-1)q^{2r-1}\left(n_a(2r, m-1) - n_a(2r, m-2)\right)\\
	&  + (q-1)q^{m-2}n_a(2r-2, m-1)- q^{2r-2}\mathrm{w}_{2k-2}(2r-2, m-2)  \\
	& - (q-1)q^{2r-3}\left(n_a(2r-2, m-1) - n_a(2r-2, m-2)\right).
	\end{split}
	\end{equation*}
\end{theorem}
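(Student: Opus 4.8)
The plan is to reduce the trace condition to a concrete linear constraint and then count by peeling off the last row and column. First I would note that, since $E_{2k}$ is block diagonal with $k$ copies of $E$, a direct computation gives $\tr(E_{2k}A)=-2\sum_{i=1}^{k}a_{2i-1,2i}$; as $q$ is odd, the defining condition $\tr(E_{2k}A)\neq 0$ is therefore equivalent to $s(A):=\sum_{i=1}^{k}a_{2i-1,2i}\neq 0$. By the congruence invariance established in Corollary \ref{cor:feweights} I may replace $E_{2k}$ by any congruent skew form of rank $2k$, and I would choose the $k$ active hyperbolic pairs to sit at the index pairs $(1,2),\dots,(2k-3,2k-2)$ and $(m-1,m)$. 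With this choice the form splits as $s(A)=s_0(A)+a_{m-1,m}$, where $s_0(A)=a_{12}+\cdots+a_{2k-3,2k-2}$ involves only the first $m-2$ coordinates and is exactly the linear form attached to $E_{2k-2}$. This splitting is what makes the rank-$(2k-2)$ weights reappear in the recursion.

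Next I would establish the basic rank dichotomy for a single border. Writing $A=\begin{bmatrix} A'' & b \\ -b^{T} & 0\end{bmatrix}$ with $A''$ an $(m-1)\times(m-1)$ skew-symmetric matrix and $b\in\Fq^{m-1}$, one shows that $\rank(A)=\rank(A'')$ when $b$ lies in the column space $\mathrm{Col}(A'')$ and $\rank(A)=\rank(A'')+2$ otherwise. The first case follows by clearing the last column: if $b=A''x$, then subtracting $\sum_i x_i$ times column $i$ annihilates the top part and leaves the bottom entry $x^{T}A''x=0$ by skew-symmetry, after which the last row clears by symmetry; the second case follows since the column rank must then rise and the total rank must stay even. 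Splitting according to this dichotomy writes $\Wkr$ as $(\mathrm{I})+(\mathrm{II})$, where $(\mathrm{I})$ ranges over $A''$ of rank $2r$ with $b\in\mathrm{Col}(A'')$ and $(\mathrm{II})$ over $A''$ of rank $2r-2$ with $b\notin\mathrm{Col}(A'')$; in both, the surviving constraint is $b_{m-1}\neq -s_0(A'')$, using $a_{m-1,m}=b_{m-1}$.

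The heart of the computation is controlling how the single coordinate $b_{m-1}$ is distributed over $\mathrm{Col}(A'')$, and here I would use a second, radical-based dichotomy. If the index $m-1$ lies in the radical of $A''$ (equivalently, row and column $m-1$ of $A''$ vanish), then every $b\in\mathrm{Col}(A'')$ has $b_{m-1}=0$, so the constraint $b_{m-1}\neq -s_0(A'')$ collapses to the condition $s_0(A'')\neq 0$ on $A''$ alone; otherwise $b_{m-1}$ is equidistributed over $\Fq$ on $\mathrm{Col}(A'')$, attaining each value $q^{\,\rank(A'')-1}$ times. Two bookkeeping facts then close the loop: the rank-$2\rho$ matrices $A''$ with $m-1$ in the radical are precisely the rank-$2\rho$ skew matrices supported on the first $m-2$ indices, so there are $n_a(2\rho,m-2)$ of them, leaving $n_a(2\rho,m-1)-n_a(2\rho,m-2)$ with $m-1$ outside the radical; and for exactly these radical matrices the form $s_0$ coincides with the $E_{2k-2}$-form in dimension $m-2$, so the number of them with $s_0\neq 0$ is $\mathrm{w}_{2k-2}(2\rho,m-2)$.

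Finally I would assemble the pieces. Evaluating $(\mathrm{I})$ at $\rho=r$ produces the two positive leading terms $q^{2r}\mathrm{w}_{2k-2}(2r,m-2)$ and $(q-1)q^{2r-1}(n_a(2r,m-1)-n_a(2r,m-2))$. Evaluating $(\mathrm{II})$ at $\rho=r-1$ is more delicate: for $b\notin\mathrm{Col}(A'')$ one counts $b_{m-1}\neq -s_0(A'')$ by taking all $b$ avoiding the forbidden value of $b_{m-1}$ and subtracting the forbidden members lying in $\mathrm{Col}(A'')$, which is where the negative terms $-q^{2r-2}\mathrm{w}_{2k-2}(2r-2,m-2)$ and $-(q-1)q^{2r-3}(n_a(2r-2,m-1)-n_a(2r-2,m-2))$ originate, together with the surviving $(q-1)q^{m-2}n_a(2r-2,m-1)$ after the two intermediate $(q-1)q^{m-2}n_a(2r-2,m-2)$ contributions cancel. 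I expect this inclusion–exclusion in case $(\mathrm{II})$ to be the main obstacle: the interaction between the radical condition and the value of $s_0$ must be tracked case by case to pin down the exponents of $q$ and the $(q-1)$ factors exactly, and it is precisely these sign-and-cancellation details that yield the stated formula.
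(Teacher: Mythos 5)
Your proposal is correct and takes essentially the same route as the paper: after your (harmless) congruence moving the active hyperbolic pair to positions $(m-1,m)$, peeling off the last row and column and splitting by $\rank(A'')\in\{2r,2r-2\}$ together with the radical condition on index $m-1$ reproduces exactly the paper's four cases (there the $(2k)^{\it th}$ row/column is deleted and the dichotomy is $\Ak=0$ versus $\Ak\neq 0$), and your subcase counts, including the cancellation of the two $(q-1)q^{m-2}n_a(2r-2,m-2)$ contributions, match equations \eqref{eq:case1}--\eqref{eq:case4} term by term. Your only additions are the explicit rank-dichotomy lemma and the equidistribution of $b_{m-1}$ over $\mathrm{Col}(A'')$, both of which the paper uses implicitly.
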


\begin{proof}
Given a matrix $A \in A(2r,m)$, denote by $\A$ the matrix obtained by deleting the $(2k)^{\it th}$ row and column of the matrix $A$. Then $\A$ is a skew-symmetric matrix of rank between $2r-2$ and $2r$. Since the rank of a skew-symmetric matrix is even, the rank of $\A$ can be either $2r$ or $2r-2$. Now consider the map
	\[
	\phi: \supp_{2k}(2r, m) \to A(2r, m-1)\bigcup A(2r-2, m-1),\text{ defined by }A\mapsto \A.
	\]
We use this map to count the cardinality of the set $\supp_{2k}(2r, m)$ which is $\Wkr$. Using the map $\phi$ we get,
	\[
	\Wkr=\sum\limits_{\A\in A(2r, m-1)}|\phi^{-1}(\A)| \;+\; \sum\limits_{\A\in A(2r-2, m-1)}|\phi^{-1}(\A)|.
	\]
	
	For any matrix $\A$, let $\Ak$ denote the $(2k-1)^{\it th}$ row of $\A$. We divide each sum in the above expression in two different terms, depending on whether $\Ak$ is zero or non-zero. Rewriting the above expression, we get
	
	\begin{equation}
	\label{eq:fiberexp}
	\begin{split}
	\Wkr & =\sum _{\substack{\A\in A(2r, m-1) \\ \Ak=0}}|\phi^{-1}(\A)| \;+\; \sum _{\substack{\A\in A(2r, m-1)  \\ \Ak\neq 0}}|\phi^{-1}(\A)|\\
	& +  \sum _{\substack{\A\in A(2r-2, m-1)  \\ \Ak=0}}|\phi^{-1}(\A)|\;+ \; \sum _{\substack{\A\in A(2r-2, m-1) \\ \Ak\neq 0}}|\phi^{-1}(\A)|.
	\end{split}
	\end{equation}
	
We divide the 	counting of the fibers $\phi^{-1}(\A)$ in four different cases corresponding to the four summations in this equation. Given a row or column vector $v=(v_1,\dots,v_m) \in \FF_q^m$, we will use the phrase \emph{shortened row or column vector} for the vector $v' \in \FF_q^{m-1}$ obtained from $v$ by deleting its $(2k)^{\it th}$ coordinate. In this way, we can for example say that the rows of $\A$ are obtained by shortening rows of the matrix $A$ and likewise for columns.

\begin{case}
Let $\A\in A(2r, m-1)$ and $\Ak=0$.  Let $A\in \phi^{-1}(\A)$. Since $A$ and $\A$ have the same rank $2r$, we conclude that the shortened $(2k)^{\it th}$ column of $A$ lies in the column space of $\A$. This in particular means that the entry $A_{2k-1\,2k}$ is a linear combination of the entries in $\Ak$ and hence zero. Hence, $\tr(E_{2k-2}\A)=\tr(E_{2k}A)\neq 0$, where the inequality follows by our assumption that $A \in \supp_{2k}(2r,m)$. Therefore, the first summation in equation \eqref{eq:fiberexp} runs over $\mathrm{w}_{2k-2}(2r, m-2)$ many matrices $\A$ and for any such $\A$ we get $|\phi^{-1}(\A)|=q^{2r}$, since the dimension of the column space of $\A$ equals $2r$. Therefore in total, we get
\begin{equation}\label{eq:case1}
		\sum _{\substack{\A\in A(2r, m-1) \\ \Ak=0}}|\phi^{-1}(\A)| = q^{2r}\mathrm{w}_{2k-2}(2r, m-2).
\end{equation}
\end{case}

\begin{case}
Let $\A\in A(2r, m-1)$ and $\Ak\neq 0$. As in the previous case, for any $A\in \phi^{-1}(\A)$ the shortened $(2k)^{\it th}$ column of $A$ must lie in the column span of $\A$.
Moreover, we require that $2A_{2k-1\, 2k}\neq-\tr(E_{2k-2}\A)$.
Since $\Ak$ is assumed to be non zero, the projection map from the column space of $\A$ to the $(2k-1)^{\it th}$ coordinate is nonzero. Since the column space of $\A$ has dimension $2r$, we see that for a given $\A$,  there are exactly $q^{2r}-q^{2r-1}$ possibilities to choose the $(2k)^{\it th}$ column of $A$ such that $2A_{2k-1\, 2k}\neq-\tr(E_{2k-2}\A).$ In other words: $|\phi^{-1}(\A)|=q^{2r}-q^{2r-1}$. Further, the number of matrices $\A\in A(2r, m-1)$ such that $\Ak\neq 0$ is given by $n_a(2r, m-1)- n_a(2r, m-2)$. In total we get
		\begin{equation}
		\label{eq:case2}
		\sum _{\substack{\A\in A(2r, m-1) \\ \Ak\neq 0}}|\phi^{-1}(\A)|=	(q-1)q^{2r-1}(n_a(2r, m-1)- n_a(2r, m-2)).
		\end{equation}

	\end{case}

	\begin{case}
		Let $\A\in A(2r-2, m-1)$ and $\Ak=0$. This is the most complex case therefore, we divide the counting in several subcases. \newline
		First, we count the number of  $A\in \phi^{-1}(\A)$ satisfying $A_{2k-1\,2k}=0$. Since $A_{2k-1\,2k}=0$, we have $m-2$ positions in the $(2k)^{\it th}$ column of $A$ that are undetermined. Moreover, the shortened $(2k)^{\it th}$ column of $A$ can not be in the column span of $\A$.
Since $\A$ is of rank $2r-2$ and $\A_{2k-1}=0$, this leaves exactly $q^{m-2}-q^{2r-2}$ possibilities for the matrix $A$. 
Further, in all these cases we get $\tr(E_{2k-2}\A)=\tr(E_{2k}A)\neq 0$.
Therefore, there are $\mathrm{w}_{2k-2}(2r-2, m-2)$ many possibilities for $\A$.

Second, we count the number of  $A\in \phi^{-1}(\A)$ satisfying $A_{2k-1\,2k}\neq 0$. We further divide this in two parts depending on $\tr(E_{2k-2}\A)$ being zero or non-zero. If $\tr(E_{2k-2}\A)=0$, then $\tr(E_{2k}A)=2A_{2k-1\,2k} \neq 0.$ Hence, the only restriction we have is that the shortened $(2k)^{\it th}$ column of $A$ can not be in the column span of $\A$. However, since we assign a nonzero value to $A_{2k-1\,2k}$, this is guaranteed. The remaining positions can be chosen arbitrarily. Therefore, in this case we get $n_a(2r-2, m-2)-\mathrm{w}_{2k-2}(2r-2, m-2)$ many matrices $\A$ with  $\tr(E_{2k-2}\A)=0$ and for any such $\A$, the cardinality of the fiber is $(q-1)q^{m-2}$.

If $\tr(E_{2k-2}\A)\neq 0$, then in addition to $A_{2k-1\,2k}\neq 0$ we require $2A_{2k-1\,2k}\neq -\tr(E_{2k-2}\A)$, leaving $q-2$ possible values for $A_{2k-1\,2k}$. We have $\mathrm{w}_{2k-2}(2r-2, m-2)$ many possibilities for the matrix $\A$, since we assumed $\tr(E_{2k-2}\A)\neq 0$. For a given $\A$, we have $(q-2)q^{m-2}$ many matrices $A$ in the fiber $\phi^{-1}(\A)$. Adding all together, we obtain
		\begin{equation}
		\label{eq:case3}
		\begin{split}
		\sum _{\substack{\A\in A(2r-2, m-1) \\ \Ak=0}}|\phi^{-1}(\A)| & =(q^{m-2}-q^{2r-2})\mathrm{w}_{2k-2}(2r-2, m-2) \\
		&+  (q-1)q^{m-2}\left(n_a(2r-2, m-2)-\mathrm{w}_{2k-2}(2r-2, m-2)\right)\\
		&+ (q-2)q^{m-2}\mathrm{w}_{2k-2}(2r-2, m-2).
		\end{split}
		\end{equation}
		
\end{case}
	
\begin{case}
Finally, let $\A\in A(2r-2, m-1)$ and $\Ak\neq0$. Since $\A$ is of rank $2r-2$, the shortened $(2k)^{\it th}$ column of $A$ must not lie in the column span of $\A$. Further, $2A_{2k-1\,2k}\neq-\tr(E_{2k-2}\A)$. Like in Case $2$, we consider the projection map on the $(2k-1)^{\it th}$ coordinate. First of all, we require $2A_{2k-1\,2k}\neq-\tr(E_{2k-2}\A)$, leaving $q^{m-1}-q^{m-2}$ a priori possibilities for the $(2k)^{\it th}$ column of $A$. However, since the shortened $(2k)^{\it th}$ column of $A$ cannot lie in the column span of $\A$, $q^{2r-2}-q^{2r-3}$ many of these possibilities need to be excluded. This shows that for a given $\A$ as above, $|\phi^{-1}(\A)|=(q^{m-1}-q^{m-2})-(q^{2r-2}-q^{2r-3}).$
Also, there are $n_a(2r-2, m-1)- n_a(2r-2, m-2)$ many matrices $\A$ satisfying $\A\in A(2r-2, m-1)$ and $\Ak\neq0$. All together, we get
\begin{equation}\label{eq:case4}
		\sum _{\substack{\A\in A(2r-2, m-1) \\ \Ak\neq0}}|\phi^{-1}(\A)|=(q-1)(q^{m-2}-q^{2r-3})(n_a(2r-2, m-1)- n_a(2r-2, m-2)).
\end{equation}
\end{case}
Now the theorem follows from equations \eqref{eq:fiberexp},\;  \eqref{eq:case1},\;\eqref{eq:case2},\;\eqref{eq:case3}, and \eqref{eq:case4}.
\end{proof}
We will use Theorem \ref{Thm: Iterativeweight} to find expressions for the weights $\Wk$ of the codewords in $\C.$
It will be convenient for every $0\leq r\leq t$ to introduce the quantity
\begin{equation}\label{eq:P}
P_m(2k, 2r):= q^{2r}\mathrm{w}_{2k-2}(2r, m-2) + (q-1)q^{2r-1}\left(n_a(2r, m-1) - n_a(2r, m-2)\right).
\end{equation}
Note that $P_m(2k, 0)=0$. Theorem \ref{Thm: Iterativeweight} has the following corollary.
\begin{corollary}\label{cor: wtofcodeword}
Let $t$ and $k$ be an integers such that $0 < 2t \le m$ and $0 < 2k \le m.$  Then
	\begin{equation*}
	\Wk= P_m(2k, 2t) + (q-1)q^{m-2}N_a(2t-2, m-1).
	\end{equation*}
\end{corollary}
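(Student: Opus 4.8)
The plan is to insert the formula of Theorem~\ref{Thm: Iterativeweight} into the splitting identity~\eqref{eq: splitwt} and to recognize the resulting expression as a telescoping sum. The essential observation is that, upon comparing the right-hand side of Theorem~\ref{Thm: Iterativeweight} with the definition~\eqref{eq:P} of $P_m(2k,2r)$, the five summands regroup cleanly: the first line of the theorem is precisely $P_m(2k,2r)$, while the two terms carrying the factors $q^{2r-2}$ and $q^{2r-3}$ are exactly $-P_m(2k,2r-2)$, obtained by replacing $2r$ with $2r-2$ in~\eqref{eq:P}. Thus for each $r$ with $1 \le r \le t$ I would record the compact identity
\[
\Wkr = P_m(2k,2r) - P_m(2k,2r-2) + (q-1)q^{m-2}n_a(2r-2,m-1).
\]

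Next I would sum this identity over $r$ from $1$ to $t$. The differences $P_m(2k,2r) - P_m(2k,2r-2)$ telescope, leaving only $P_m(2k,2t) - P_m(2k,0)$; since $P_m(2k,0)=0$ by the remark following~\eqref{eq:P}, this contributes exactly $P_m(2k,2t)$. It then remains to evaluate $\sum_{r=1}^{t}(q-1)q^{m-2}n_a(2r-2,m-1)$. Re-indexing by $s=r-1$ turns the inner sum into $\sum_{s=0}^{t-1} n_a(2s,m-1)$, which by the defining relation~\eqref{eq: splitlength} equals $N_a(2t-2,m-1)$. Combining the two contributions yields
\[
\Wk = P_m(2k,2t) + (q-1)q^{m-2}N_a(2t-2,m-1),
\]
as claimed.

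There is no genuine obstacle here: once the regrouping into $P_m(2k,2r) - P_m(2k,2r-2)$ is spotted, the remainder is a routine telescoping together with a single re-indexing of the additive contribution. The only point demanding a little care is to confirm that the last two terms of Theorem~\ref{Thm: Iterativeweight} match $-P_m(2k,2r-2)$ verbatim, that is, that substituting $2r-2$ for $2r$ in~\eqref{eq:P} reproduces both the term $q^{2r-2}\mathrm{w}_{2k-2}(2r-2,m-2)$ and the term $(q-1)q^{2r-3}\left(n_a(2r-2,m-1)-n_a(2r-2,m-2)\right)$ with the correct signs, so that the isolated summand $(q-1)q^{m-2}n_a(2r-2,m-1)$ is precisely what survives the grouping.
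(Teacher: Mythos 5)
Your proposal is correct and follows essentially the same route as the paper: the paper's proof likewise rewrites Theorem~\ref{Thm: Iterativeweight} as $\Wkr = P_m(2k,2r)-P_m(2k,2r-2)+(q-1)q^{m-2}n_a(2r-2,m-1)$, telescopes via equation~\eqref{eq: splitwt} using $P_m(2k,0)=0$, and identifies $\sum_{r=1}^{t} n_a(2r-2,m-1)$ with $N_a(2t-2,m-1)$ through equation~\eqref{eq: splitlength}. Your verification that the last two terms of the theorem match $-P_m(2k,2r-2)$ verbatim is exactly the ``direct computation'' the paper leaves implicit.
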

\begin{proof}
Using Theorem \ref{Thm: Iterativeweight} and the quantity $P_m(2k, 2r)$ defined in equation \eqref{eq:P}, a direct computation shows that
\begin{equation*}
\Wkr= P_m(2k, 2r)-P_m(2k, 2r-2) + (q-1)q^{m-2}n_a(2r-2, m-1).
\end{equation*}
Then equation \eqref{eq: splitwt} implies that
\[
\Wk=P_m(2k,2t)-P_m(2k,0)+(q-1)q^{m-2}\sum_{r=1}^t n_a(2r-2, m-1).
\]
The corollary now follows.	
\end{proof}

\begin{theorem}
	\label{Thm: TheMinWt}
	The minimum distance of the code $\CAtm$ is given by
$$\dfrac{\mathrm{W}_2(2t, m)}{q-1}=(q^{m-2t}-1)q^{m+2t-4} n_a(2t-2, m-2)\; +\; q^{m-2}N_a(2t-2, m-1).$$
\end{theorem}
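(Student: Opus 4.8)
The plan is to combine the structural facts already established with a single explicit computation. By Corollary \ref{cor:feweights} together with Lemma \ref{lemma: aff-proj}, every nonzero codeword of $\CAtm$ has Hamming weight $\mathrm{W}_{2k}(2t,m)/(q-1)$ for some integer $k$ with $1\le k\le \lfloor m/2\rfloor$, and conversely each such value is realized (for instance by the codeword $\Ev(f)$ where $f(A)=-\tr(E_{2k}A)$, which is nonzero since $E_{2k}\neq 0$). Hence the minimum distance equals $\min_{1\le k\le \lfloor m/2\rfloor}\mathrm{W}_{2k}(2t,m)/(q-1)$, and the theorem splits into two claims: that this minimum is attained at $k=1$, and that $\mathrm{W}_2(2t,m)/(q-1)$ has the stated closed form.

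For the first claim I would isolate the dependence of $\mathrm{W}_{2k}(2t,m)$ on $k$. By Corollary \ref{cor: wtofcodeword} we have $\mathrm{W}_{2k}(2t,m)=P_m(2k,2t)+(q-1)q^{m-2}N_a(2t-2,m-1)$, and the second summand does not involve $k$. Expanding $P_m(2k,2t)$ through its definition \eqref{eq:P}, the only $k$-dependent term is $q^{2t}\mathrm{w}_{2k-2}(2t,m-2)$. Since $E_0$ is the zero matrix, $\tr(E_0 A)=0$ for all $A$, so $\mathrm{w}_0(2t,m-2)=0$, whereas $\mathrm{w}_{2k-2}(2t,m-2)\ge 0$ is a cardinality for every $k$. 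Therefore
\[
\mathrm{W}_{2k}(2t,m)-\mathrm{W}_2(2t,m)=q^{2t}\,\mathrm{w}_{2k-2}(2t,m-2)\ge 0,
\]
so $\mathrm{W}_2(2t,m)$ is the smallest of these weights and the minimum distance is $\mathrm{W}_2(2t,m)/(q-1)$.

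For the second claim I would specialize Corollary \ref{cor: wtofcodeword} to $k=1$. Using $\mathrm{w}_0(2t,m-2)=0$ once more gives $P_m(2,2t)=(q-1)q^{2t-1}\big(n_a(2t,m-1)-n_a(2t,m-2)\big)$, whence
\[
\frac{\mathrm{W}_2(2t,m)}{q-1}=q^{2t-1}\big(n_a(2t,m-1)-n_a(2t,m-2)\big)+q^{m-2}N_a(2t-2,m-1).
\]
The second term already matches the target, so it remains to prove the identity
\[
q^{2t-1}\big(n_a(2t,m-1)-n_a(2t,m-2)\big)=(q^{m-2t}-1)\,q^{m+2t-4}\,n_a(2t-2,m-2).
\]

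I expect this last identity to be the main obstacle, as it is the only genuinely computational step. I would prove it from the explicit formula \eqref{eq:NumbSkewMat}: writing $n_a(2t,m-1)$ and $n_a(2t,m-2)$ over the common denominator $\prod_{i=0}^{t-1}(q^{2(t-i)}-1)$ and common prefactor $q^{t(t-1)}$, their numerators share the factor $K:=\prod_{i=1}^{2t-1}(q^{m-1-i}-1)$, and the difference of numerators equals $K\,q^{m-1-2t}(q^{2t}-1)$. Cancelling $(q^{2t}-1)$ against the top factor of the denominator and isolating $(q^{m-2t}-1)$ from $K$ exhibits the surviving product as a multiple of $n_a(2t-2,m-2)$; matching the remaining powers of $q$ (both exponents reduce to $m+t^2-t-2$) then yields the identity. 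A little care is needed at the boundary case $t=1$, where $n_a(2t-2,m-2)=n_a(0,m-2)$ must be read with the convention that the rank-zero stratum contributes the single zero matrix, i.e.\ $n_a(0,\cdot)=1$, consistent with the length formula $\Lambda_a(2t,m)=(N_a(2t,m)-1)/(q-1)$; with this reading the identity, and hence the theorem, holds for all $t\ge 1$.
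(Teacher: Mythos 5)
Your proposal is correct and follows essentially the same route as the paper: reduce via Lemma \ref{lemma: aff-proj} and Corollary \ref{cor: wtofcodeword} to the identity $\mathrm{W}_{2k}(2t,m)-\mathrm{W}_{2}(2t,m)=q^{2t}\,\mathrm{w}_{2k-2}(2t,m-2)\ge 0$, and then evaluate $P_m(2,2t)$ from equation \eqref{eq:NumbSkewMat} by exactly the factorization you sketch (common factor, cancellation of $q^{2t}-1$, matching powers of $q$). Your attention to the boundary case is a genuine point of added care: the paper's final step implicitly uses the empty-product value $n_a(0,m-2)=1$, so its earlier statement that $n_a(0,m)=0$ must, as you observe, be read as $n_a(0,\cdot)=1$ for the $t=1$ case and the length formula to be consistent.
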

\begin{proof}
First we show that $\mathrm{W}_2(2t, m)$ is the minimum distance of $\C$. Lemma \ref{lemma: aff-proj} then implies that $\mathrm{W}_2(2t, m)/(q-1)$ is the minimum distance of $\CAtm.$

We know that the weight of a non-zero codeword of $\C$ is among $\Wk$ where $1\leq k\leq \floor*{\frac{m}{2}}$. 
From Corollary \ref{cor: wtofcodeword}, we get
	\[
	\Wk -\mathrm{W}_2(2t, m) = P_m(2k, 2t) - P_m(2, 2t).		
	\]
Using equation \eqref{eq:P} and taking into account that $\mathrm{w}_0(2t, m-2)=0$, we get
\begin{equation}\label{eq:W2k_W2}
	\Wk -\mathrm{W}_2(2t, m)= q^{2t}\mathrm{w}_{2k-2}(2t, m-2).
\end{equation}
	In particular, we get $\Wk\geq \mathrm{W}_2(2t, m)$ for every $1\leq k\leq \floor*{\frac{m}{2}}$ and hence $\mathrm{W}_2(2t, m)$ is the minimum distance of the code $\C$.

To finish the proof, it is sufficient to show that
	\[
	\mathrm{W}_2(2t, m) =(q-1) (q^{m-2t}-1)q^{m+2t-4} n_a(2t-2, m-2)\; +\;(q-1) q^{m-2}N_a(2t-2, m-1).
	\]
Corollary \ref{cor: wtofcodeword} implies that
	\[
	\mathrm{W}_2(2t, m)\; =\; P_m(2, 2t) + (q-1)q^{m-2}N_a(2t-2, m-1).
	\]
Therefore, we only need to show that
	$$
	P_m(2, 2t) =(q-1)(q^{m-2t}-1)q^{m+2t-4} n_a(2t-2, m-2).
	$$
From equation \eqref{eq:P}, we have
	$$
	P_m(2, 2t)= q^{2t}\mathrm{w}_0(2t, m-2) +(q-1) q^{2t-1}\left(n_a(2t, m-1) - n_a(2t, m-2)\right).	
	$$
Since $\mathrm{w}_0(2t, m-2)=0$, equation \eqref{eq:NumbSkewMat} implies
	\begin{align*}
	P_m(2, 2t) &= (q-1)q^{2t-1}\left\{q^{t(t-1)}\dfrac{\prod\limits_{i=0}^{2t-1}(q^{m-1-i}-1)}{\prod\limits_{i=0}^{t-1}(q^{2(t-i)}-1)}- q^{t(t-1)}\dfrac{\prod\limits_{i=0}^{2t-1}(q^{m-2-i}-1)}{\prod\limits_{i=0}^{t-1}(q^{2(t-i)}-1)}\right\}\\
	&=(q-1)q^{2t-1}\left\{q^{t(t-1)}\dfrac{\prod\limits_{i=0}^{2t-2}(q^{m-2-i}-1)}{\prod\limits_{i=0}^{t-1}(q^{2(t-i)}-1)}    \left( q^{m-1}-q^{m-2t-1}\right)\right\}\\
	&=(q-1)q^{2t-1}\left\{q^{m-2t-1}\left(q^{m-2t}-1\right)q^{t(t-1)}\dfrac{\prod\limits_{i=0}^{2(t-1)-1}(q^{m-2-i}-1)}{\prod\limits_{i=0}^{(t-1)-1}(q^{2(t-i)}-1)}   \right\}\\
	&= (q-1)(q^{m-2t}-1)q^{m+2t-4} q^{(t-1)(t-2)}\dfrac{\prod\limits_{i=0}^{2(t-1)-1}(q^{m-2-i}-1)}{\prod\limits_{i=0}^{(t-1)-1}(q^{2(t-i)}-1)} \\
	&= (q-1)(q^{m-2t}-1)q^{m+2t-4} n_a(2t-2, m-2).
	\end{align*}
	This completes the proof of the theorem.
\end{proof}

\begin{remark}
Recall that $1 \le t \le \lfloor m/2 \rfloor.$
In the extremal choices of $t$, the code $\CAtm$ is equal to well known previously studied codes. For $t=1$, the determinantal variety $\DetVar$ is simply the line Grassmann variety $G(2,m).$ Hence in this case the code $\CAtm$ is a particular instance of the Grassmann codes studied in \cite{N}. In fact from \cite{N}, the complete weight enumerator of the code $C_{\mathbf{A}}(2, m)$ can be obtained. From Nogin's results it is easy to show that $\mathrm{W}_2(2, m)<\cdots < \mathrm{W}_{2 \lfloor m/2 \rfloor}(2, m).$

For $t=\lfloor \frac m2 \rfloor,$ we have $\DetVar=\Pam.$ Hence in this case $\CAtm$ is a first order projective Reed--Muller code. Projective Reed--Muller codes were introduced in \cite{Lachaud}. It is well known that first order projective Reed--Muller codes are constant weight codes. Indeed equation \eqref{eq:W2k_W2} implies that $\mathrm{W}_2(2 \lfloor m/2 \rfloor, m)=\mathrm{W}_{2 k}(2\lfloor m/2 \rfloor, m)$ for every $k$.
\end{remark}

It is in general not clear how the elements in the sequence $\Wk$, $2 \le 2k \le m$ are ordered. However, in case $1 < t < \lfloor m/2 \rfloor,$ we are able to determine the minimum among them.

\begin{theorem}
Suppose that $4 \le 2k \le m$ and $4 \le 2t \le m-2.$ Then $\mathrm{W}_2(2t, m) < \Wk.$ Moreover, the number of minimum weight codeword in $\CAtm$ equals $(q^m-1)(q^{m-1}-1)/(q^2-1)$. 
\end{theorem}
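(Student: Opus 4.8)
For the inequality I would start from equation \eqref{eq:W2k_W2}, established in the proof of Theorem \ref{Thm: TheMinWt}, which reads
\[
\Wk-\mathrm{W}_2(2t,m)=q^{2t}\,\mathrm{w}_{2k-2}(2t,m-2).
\]
Since $q^{2t}>0$, proving $\mathrm{W}_2(2t,m)<\Wk$ is equivalent to showing $\mathrm{w}_{2k-2}(2t,m-2)>0$, i.e. that $\supp_{2k-2}(2t,m-2)=\{A\in A(2t,m-2):\tr(E_{2k-2}A)\neq0\}$ is nonempty. The hypotheses $2t\le m-2$ and $2k\le m$ guarantee that both a rank-$2t$ skew matrix and the block matrix $E_{2k-2}$ fit inside size $m-2$, so all quantities make sense; the core task is then simply to produce one matrix in this set.

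To do so I would take the block-diagonal skew-symmetric matrix $A$ of size $m-2$ whose $2\times2$ diagonal blocks are $\begin{bmatrix}0&\alpha\\-\alpha&0\end{bmatrix}$ in block position $1$ and $\begin{bmatrix}0&1\\-1&0\end{bmatrix}$ in block positions $2,\dots,t$ (and zero elsewhere), for a scalar $\alpha\in\FF_q^{*}$ to be chosen. For any $\alpha\neq0$ this $A$ has rank exactly $2t$. Writing $E_{2k-2}=\sum_{i=1}^{k-1}E(2i-1,2i)$ and using the trace identity $\tr(E(p,\ell)A)=-2A_{p\ell}$ from the proof of Corollary \ref{cor:feweights}, one obtains $\tr(E_{2k-2}A)=-2(\alpha+s-1)$ with $s:=\min(k-1,t)\ge1$. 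Since $q$ is odd, $-2\neq0$; and since $q\ge3$, I may pick $\alpha\in\FF_q^{*}$ with $\alpha\neq1-s$, which forces $\tr(E_{2k-2}A)\neq0$. Hence $\mathrm{w}_{2k-2}(2t,m-2)\ge1$, and the strict inequality holds for every $k$ with $2\le k\le\lfloor m/2\rfloor$.

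For the count, the inequality just proved shows that, under $4\le2t\le m-2$, a nonzero codeword of $\C$ has minimum weight $\mathrm{W}_2(2t,m)$ exactly when the skew matrix $F$ attached to it by Theorem \ref{thm:skewsymm} has rank $2$. Since $F\mapsto(-\tr(FA))_A$ is a bijection from $\Am$ onto the codewords of $\C$, and since $\Ev$ is a bijection whose weights are $(q-1)^{-1}$ times those of $\widehat{\Ev}$ by Lemma \ref{lemma: aff-proj}, the minimum weight codewords of $\CAtm$ correspond bijectively to the rank-$2$ matrices in $\Am$. Their number is $n_a(2,m)$, and substituting $r=1$ into \eqref{eq:NumbSkewMat} gives $n_a(2,m)=(q^m-1)(q^{m-1}-1)/(q^2-1)$, as required.

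The only step requiring genuine care is the construction above: one must simultaneously ensure that $A$ has rank exactly $2t$ and that the single scalar $\alpha$ avoids the (at most two) forbidden values in $\FF_q$, which is precisely where oddness of $q$ (giving $-2\neq0$) and $q\ge3$ are used. Everything after that is bookkeeping with the bijections of Theorem \ref{thm:skewsymm} and Lemma \ref{lemma: aff-proj}, together with the evaluation of \eqref{eq:NumbSkewMat} at $r=1$.
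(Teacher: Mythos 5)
Your proof is correct, and it takes a genuinely different route from the paper at the one nontrivial point. Both arguments reduce, via \eqref{eq:W2k_W2}, to showing $\mathrm{w}_{2k-2}(2t,m-2)>0$, and both count minimum weight codewords identically (the bijections of Theorem \ref{thm:skewsymm} and Lemma \ref{lemma: aff-proj}, then \eqref{eq:NumbSkewMat} at $r=1$). But the paper proves positivity nonconstructively: it invokes the Case 4 lower bound \eqref{eq:case4} from the proof of Theorem \ref{Thm: Iterativeweight}, giving $\mathrm{w}_{2k-2}(2t, m-2) \ge (q-1)(q^{m-4}-q^{2t-3})(n_a(2t-2, m-3)- n_a(2t-2, m-4))$, and then repeats the product-formula manipulation from Theorem \ref{Thm: TheMinWt} to see this is positive for $t\ge 2$. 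You instead exhibit an explicit witness: a block-diagonal rank-$2t$ matrix $A$ with one block $\begin{bmatrix}0&\alpha\\-\alpha&0\end{bmatrix}$ and $t-1$ copies of $E$, for which $\tr(E_{2k-2}A)=-2(\alpha+s-1)$, $s=\min(k-1,t)$; I checked this trace computation and the block-size constraints ($2t\le m-2$ for $A$, $2k-2\le m-2$ for $E_{2k-2}$), and they are fine. Your insistence on the free parameter $\alpha$ is not pedantry but essential: the naive choice $A=E_{2t}$ gives $\tr(E_{2k-2}E_{2t})=-2s$, which vanishes whenever the characteristic divides $s$ (e.g.\ $q=3$, $s=3$), and your two forbidden values $\{0,\,1-s\}$ are avoidable precisely because $q\ge 3$. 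What each approach buys: the paper's argument recycles machinery already in place and yields an explicit positive lower bound on $\mathrm{w}_{2k-2}(2t,m-2)$ (a count, not just nonemptiness), which could be useful for sharper weight comparisons; yours is more elementary and self-contained, avoiding the recursion and the product-formula computation entirely, and makes transparent exactly where oddness of $q$ enters.
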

\begin{proof}
Using equation \eqref{eq:W2k_W2}, the first part of the theorem follows once we show that $\mathrm{w}_{2k-2}(2t, m-2)>0.$ Equation \eqref{eq:case4} in the proof of Theorem \ref{Thm: Iterativeweight} implies that
$$\mathrm{w}_{2k-2}(2t, m-2) \ge (q-1)(q^{m-4}-q^{2t-3})(n_a(2t-2, m-3)- n_a(2t-2, m-4)).$$
Using a similar computation as in the proof of Theorem \ref{Thm: TheMinWt}, we can rewrite the right-hand side of this inequality and obtain that
$$\mathrm{w}_{2k-2}(2t, m-2) \ge (q-1)(q^{m-2t-1}-1)(q^{m-2t}-1)q^{m+2t-8}n_a(2t-4, m-4).$$
Since $t \ge 2$, equation \eqref{eq:NumbSkewMat} implies that $\mathrm{w}_{2k-2}(2t, m-2)>0$.

The statement about the number of minimum weight codewords in $\CAtm$ is a direct consequence of the above and Theorem \ref{thm:skewsymm}, since $n_a(2,m)=(q^m-1)(q^{m-1}-1)/(q^2-1)$ by equation \eqref{eq:NumbSkewMat}.
\end{proof}

\section{Acknowledgements}

Prasant Singh is supported by the HC\O rsted-COFUND postdoctoral grant \emph{Understanding Schubert Codes}.

Peter Beelen would like to acknowledge the support from The Danish Council for Independent Research (DFF-FNU) for the project \emph{Correcting on a Curve}, Grant No.~8021-00030B.


\begin{thebibliography}{AAAA}

\bibitem{Albert} A.A.~Albert, {\rm Symmetric and alternate matrices in an arbitrary field I}, \emph{Trans. Amer. Math. Soc.} {\bfseries 43}  (1938),  no. 3, 386--436.

\bibitem{BGH}	P.~Beelen, S.R.~Ghorpade and S.U.~Hasan,  {\rm Linear codes associated to determinantal varieties}, \emph{Discrete Math}, {\bfseries 338} (2015), 1493--1500.
	
\bibitem{BG} P.~Beelen and S.R.~Ghorpade, {\rm Hyperplane Sections of Determinantal Varieties over Finite Fields and Linear Codes}, preprint available arXiv:1809.04690.
    	
\bibitem{BP} P.~Beelen and F.~Pi\~nero, {\rm The structure of dual Grassmann codes}, \emph{Des. Codes Cryptogr.} {\bfseries 79} (2016), 451--470.

\bibitem{Carlitz} L.~Carlitz, {\rm Representations by skew forms in a finite field}, \emph{Arch. Math.} (Basel) \textbf{5},  (1954), 19--31.


\bibitem{GL}	S.R.~Ghorpade and G.~Lachaud,  {\rm Higher weights of Grassmann codes}, \emph{Coding Theory, Cryptography and Related Areas} (Guanajuato, 1998), J. Buchmann, T. H{\o}holdt, H. Stichtenoth and H. Tapia-Recillas Eds., Springer-Verlag, Berlin, 	2000, 122--131.

\bibitem{Hansen} J.~Hansen, {\rm Toric surfaces and error correcting codes}, \emph{Coding Theory, Cryptography and Related Areas} (Guanajuato, 1998), J. Buchmann, T. H{\o}holdt, H. Stichtenoth and H. Tapia-Recillas Eds., Springer-Verlag, Berlin, 	2000, 132--142.

\bibitem{HT}	J.~Harris and L.W.~Tu,  {\rm On symmetric and skew-symmetric determinantal varieties}, \emph{Topology} {\bfseries 23}, (1984), 71--84.
	
\bibitem{JLP}    T.~J\^{o}zefiak, A.~Lascoux and P.~Pragacz, {\rm Classes of determinantal varieties associated with symmetric and skew-symmetric matrices}, \emph{Izv. Akad. Nauk SSSR Ser. Mat.} {\bfseries 18}, (1981), 662--673.

\bibitem{Lachaud}
G.~Lachaud,  {\rm Projective  Reed--Muller  codes, Coding  theory  and  applications}, \emph{Lecture Notes in Comput. Sci.}, {\bfseries 311}, (1988), Springer, Berlin, 125--129.

\bibitem{Lang} S.~Lang, {\rm Algebra}, Third edition, Addison-Wesley Publishing Company, Reading, MA, 1994.

\bibitem{Little} J.B.~Little, {\rm Codes from Higher Dimensional Varieties}, Chapter 7 in \emph{Advances in Algebraic Geometry Codes. Series on Coding Theory and Cryptology} vol.5, World Scientific Publishing Co. Pte. Ltd., 257--293, 2008.

\bibitem{MacW} F.J.~MacWilliams, {\rm Orthogonal matrices over finite fields}, \emph{ Amer. Math. Monthly} {\bfseries 76} (1969), 152--164.

\bibitem{MacWSl} F.J.~MacWilliams and N.J.A.~Sloane, {\rm The Theory of Error-Correcting Codes}, North-Holland Mathematical Library, Vol. 16. North-Holland Publishing Co., Amsterdam-New York-Oxford, 1977.

\bibitem{N}	D.Yu.~Nogin, {\rm Codes associated to Grassmannians}, \emph{Arithmetic, Geometry and Coding Theory} (Luminy, 1993), R. Pellikaan, M. Perret, S. G. Vl\u{a}du\c{t}, Eds., Walter de Gruyter, Berlin,	(1996), 145--154.

\bibitem{Rodier} F.~Rodier,	{\rm Codes from flag varieties over a finite field}, \emph{J. Pure Appl. Algebra} {\bfseries 178} (2003), 203--214.

\bibitem{Diego} D.~Ruano, {\rm On the parameters of $r$-dimensional toric codes}, \emph{Finite Fields and Their Applications} {\bfseries 13} (2007), 962--976.

\bibitem{Ryan} C.T.~Ryan, {\rm An application of Grassmannian varieties to coding theory}, \emph{Congr. Numer.} {\bfseries 157} (1987), 257--271.

\bibitem{Ryan2} C.T.~Ryan, {\rm Projective codes based on Grassmann varieties}, \emph{Congr. Numer.} {\bfseries 157} (1987), 273--279.

\bibitem{S}	I.R.~Shafarevich, {\rm Basic Algebraic Geometry 1: Varieties in Projective Space}, Translated by M. Reid, Springer-Verlag Berlin Heidelberg, 2013.

\bibitem{TVN}	M.~Tsfasman, S.~Vl{\u{a}}du{\c{t}} and D.~Nogin, {\rm Algebraic Geometric Codes: Basic Notions}, Math. Surv. Monogr., vol. 139, Amer. Math. Soc., Providence, 2007.
\end{thebibliography}
\end{document}